\documentclass[11pt]{article}
\textwidth=15cm \textheight=20.5cm \topmargin=-0.5cm
\oddsidemargin=0.5cm
\usepackage{amssymb,amsmath,amsfonts}
\usepackage{amsthm}
\usepackage{setspace}

\doublespacing

\usepackage{mathabx}

\numberwithin{equation}{section} \theoremstyle{plain}
\newtheorem{theorem}{Theorem}[section]
\newtheorem{corollary}[theorem]{Corollary}

\newtheorem{lemma}[theorem]{Lemma}

\theoremstyle{definition}
\newtheorem{definition}{Definition}[section]
\newtheorem{example}{Example}[section]
\theoremstyle{remark}

\allowdisplaybreaks

\begin{document}

\title{$q$-Stieltjes classes for some families of $q$-densities }
\author{Sofiya Ostrovska and Mehmet Turan}
\date{}
\maketitle

\begin{center}
{\it Atilim University, Department of Mathematics,  Incek  06836, Ankara, Turkey}\\
{\it e-mail: sofia.ostrovska@atilim.edu.tr, mehmet.turan@atilim.edu.tr}\\
{\it Tel: +90 312 586 8211,  Fax: +90 312 586 8091}
\end{center}

\begin{abstract}
The Stieltjes classes play a significant  role  in the moment problem allowing to exhibit explicitly infinite families of probability densities with the same sequence of moments. 
In this paper, the notion of $q$-moment determinacy/indeterminacy is proposed and some conditions for a distribution to be either $q$-moment determinate or indeterminate in terms of its $q$-density have been obtained. Also, a $q$-analogue of Stieltjes classes is defined for $q$-distributions and $q$-Stieltjes classes have been constructed for a family of  $q$-densities of $q$-moment indeterminate distributions.

\end{abstract}

{\bf Keywords}: $q$-distribution, $q$-moment, $q$-moment (in)determinacy, $q$-Stieltjes class, analytic function, $q$-density

{\bf 2010 MSC:} 30E05, 44A60

\section{Introduction }

While Stieltjes classes de facto appeared in \cite{stieltjes}, the name itself is quite recent.
For good reasons, J. Stoyanov \cite{jap} suggested to use the name `Stieltjes classes'
and  launched their study as a systematic research area. Recent developments
showed that these classes are instrumental in the moment problem in general,
and for probability distributions, in particular \cite{recent, pakes,jordanlenya}.

In the present article, the notion of Stieltjes classes will be adopted with regard to $q$-distributions. These distributions are coming from $q$-calculus and are widely used in applications. See, for example, \cite{char, kemp} and the references therein.
For terminology and basic facts on $q$-distributions we refer to \cite{char}.
In this paper, it is always assumed that $0<q<1.$ The following definitions are employed in the sequel.

\begin{definition} $\cite{char}$
A function $f(t),$ $t>0,$ is a {\it $q$-density} of a random variable $X$ if the distribution function of $X$ is
\begin{align} \label{F1}
F_X(x)=F(x)=\int_0^x f(t)d_qt, \quad x>0.
\end{align}
\end{definition}

The $q$-integral which appears in the definition was proposed by Jackson as below:
$$
\int_0^{x} f(t) d_qt=x(1-q)\sum_{j=0}^\infty f(xq^j)q^j, \quad
\int_0^{\infty} f(t) d_qt=(1-q)\sum_{j=-\infty}^\infty f(q^j)q^j.
$$
See \cite[Sec. 1.11]{gasper}.
If $F$ is continuous at $0,$ then $f$ can be written as the {\it $q$-derivative} of $F:$
$$
f(x)=D_q F(x):=\frac{F(x)-F(qx)}{(1-q)x}, \quad x\neq 0.
$$

\begin{definition} $\cite{char}$
Given $q$-density $f$ of a random variable $X,$ the $k$-th order $q$-moment of $X$ is defined by
\begin{align} \label{mom}
m_q(k;X):=\int_0^\infty t^k f(t)d_qt, \quad k\in{\mathbb N}_0.
\end{align}
\end{definition}

It is evident that the magnitudes of $q$-moments depend only on the values taken on by a $q$-density on the sequence
$
{\mathcal I}_q:=\{q^j\}_{j\in{\mathbb Z}}.
$
It is natural, therefore, to consider the following equivalence relation for functions on $(0, \infty):$
\begin{align} \label{equi}
f\sim g \ \Leftrightarrow \ f(q^j)=g(q^j), \quad j\in{\mathbb Z}.
\end{align}
In other words, functions $f$ and $g$ are equivalent if they coincide on ${\mathcal I}_q.$
If $X$ has finite $q$-moments of all orders, then the probability distribution of $X$ can be classified either as
$q$-moment determinate or $q$-moment indeterminate. More precisely, probability distribution $P_X$ is {\it $q$-moment determinate} if $m_q(k;X)=m_q(k;Y)$ for all $k\in{\mathbb N}_0$ implies that   $f_X\sim f_Y$.   Otherwise, $P_X$ is {\it $q$-moment indeterminate}. In the latter case, $q$-Stieltjes classes for $f$ provide infinite families of not equivalent
$q$-densities with the same $q$-moments as $P_X.$ To be specific, the following $q$-analogues of the notions put forth by J. Stoyanov \cite{jap} are proposed:
\begin{definition}
Let $f(t), t>0$ be a $q$-density of a random variable $X.$ A function $h(t),$ $t>0$ is a {\it $q$-perturbation} for $f$ if
$M_h:=\displaystyle\sup_{t\in{\mathcal I}_q} |h(t)|=1$ and
\begin{align*}
\int_0^\infty  t^k f(t) h(t)  d_qt=0 \quad  \text{for all } k\in{\mathbb N}_0.
\end{align*}
\end{definition}

\begin{definition}
Let $f(t)$ be a $q$-density and $h(t)$ be a $q$-perturbation for $f.$ The set
\begin{align*}
{\bf S}:=\{g: g \text{ is a } q\text{-density and } \: g\sim (1+\varepsilon h)f, \ \varepsilon \in[-1, 1]\}
\end{align*}
is called a {\it $q$-Stieltjes class} for $f$ generated by $h.$
\end{definition}
It has to be pointed out that, in general, $(1+\varepsilon h)f$ is not a $q$-density. However, as it will be shown in Lemma \ref{lem1}, there exists a family of $q$-densities equivalent to $(1+\varepsilon h)f$ for all $\varepsilon\in[-1, 1]$ in the sense of \eqref{equi}. Differently put, given $f$ and $h,$ a $q$-Stieltjes class consists of all $q$-densities $g(t)$ satisfying $g(q^j)=f(q^j)[1+\varepsilon h(q^j)],$ $\varepsilon[-1, 1],$ whenever $j\in{\mathbb Z}.$
Obviously, a $q$-perturbation function and a $q$-Stieltjes class exist only for $q$-indeterminate distributions.
In this work, $q$-Stieltjes classes are constructed for a collection of $q$-densities $f$ which satisfy the next estimate for some positive constant $C:$
$f(q^{-j})\geqslant C q^{j(j+1)/2},$ $j\geq 0.$
That is, the $q$-density $f$ has rather heavy tail.
The sharpness of this result is demonstrated by Theorem \ref{th2}, where it is proved that if a $q$-density $f$ satisfies the condition
$f(q^{-j})=o(q^{j(j+1)/2})$ as $j\to \infty,$
then the distribution $P_X$ is $q$-moment determinate.

Recall the two $q$-analogues of the exponential function:
\begin{align*}
e_q(t)=\prod_{j=0}^\infty \left(1-t(1-q)q^j\right)^{-1}
\end{align*}
and
\begin{align*}
E_q(t)=\prod_{j=0}^\infty \left(1+t(1-q)q^j\right).
\end{align*}
See \cite[formula (1.24)]{char} and \cite[Sec 1.3]{gasper}. Note that $e_q(-t)E_q(t)=1.$
The distribution whose $q$-density equals $\lambda e_q(-\lambda t), \lambda>0, t>0$ is called a $q$-exponential distribution (of the first kind). See \cite[Corollary 2.1]{char}.
It should be emphasized that the $q$-moment (in)determinacy of the $q$-exponential distribution depends on the value of parameter $\lambda,$ in contrast to the well-known classical exponential distribution which is moment determinate for all $\lambda.$ See Examples \ref{ex1} and \ref{ex2}.

For the sequel, we need the following identity attributed to Euler:
\begin{align}\label{eul}
E_q\left(\frac{t}{1-q}\right)=\prod_{j=0}^\infty (1+q^j t) =
\sum_{j=0}^\infty \frac{q^{j(j-1)/2}}{(q;q)_j}\, t^j,
\end{align}
where $(a;q)_j$ is the $q$-shifted factorial defined by:
$$(a;q)_0:=1,\quad (a;q)_j:=\prod_{s=0}^{j-1}(1-aq^s), \quad a\in{\mathbb C}.$$
It is known \cite[formula (2.6)]{zeng} that for some positive constants $C_1,$ $C_2$ and $t$ large enough,
\begin{align}\label{zeng}
C_1 \exp\left\{\frac{\ln^2 t}{2\ln(1/q)}+\frac{\ln t}{2}\right\} \leqslant
E_q\left(\frac{t}{1-q} \right) \leqslant
C_2 \exp\left\{\frac{\ln^2 t}{2\ln(1/q)}+\frac{\ln t}{2}\right\}.
\end{align}
Throughout the paper, the letter $C$ with or without an index denotes a positive constant whose exact value does not have to be specified. Also, the notation 
\begin{align*}
M(r;f):=\max_{|z|=r} |f(z)|
\end{align*}
commonly adopted in the theory of analytic functions will be used repeatedly.

\section{Statement of Results}
To begin with, notice that, while all $q$-densities are non-negative on $(0, \infty)$ and normalized by
\begin{align}\label{one}
\int_0^\infty f(t)d_q t =1,
\end{align}
these two conditions do not guarantee that $f$ is a $q$-density, in distinction from probability densities. However, as the next lemma shows, a non-negative function $f$ satisfying \eqref{one} is equivalent to a $q$-density. What is more,
each equivalence class of a $q$-density $f$ contains infinitely many $q$-densities.
\begin{lemma}\label{existence}
Let $g(t)\geqslant 0,$ $t>0$ and $\int_0^\infty g(t)d_qt=1.$ Then, there exists a $q$-density $f$ such that $f\sim g.$
\end{lemma}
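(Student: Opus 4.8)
The plan is to recognize that the whole problem is governed by the fundamental theorem of $q$-calculus, and that a $q$-density is nothing but the $q$-derivative of an admissible distribution function. Since both the $q$-integral $\int_0^\infty(\cdot)\,d_qt$ and the relation $\sim$ depend only on the values taken on $\mathcal I_q=\{q^j\}_{j\in\mathbb Z}$, I would first extract from $g$ the numbers
\[
p_j:=(1-q)\,q^j g(q^j),\qquad j\in\mathbb Z .
\]
Then $p_j\ge 0$ because $g\ge 0$, and the hypothesis $\int_0^\infty g\,d_qt=1$ is exactly $\sum_{j\in\mathbb Z}p_j=1$, so $(p_j)$ is a probability mass function carried by $\mathcal I_q$. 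It therefore defines a bona fide distribution function $F(x):=\sum_{j:\,q^j\le x}p_j$ on $(0,\infty)$, which is non-decreasing and right-continuous with $F(\infty)=1$; and since the tails $\sum_{j\ge N}p_j\to 0$ as $N\to\infty$, one also has $F(0^+)=0$, so $F$ is continuous at $0$.

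I would then set $f:=D_qF$, i.e. $f(x)=\bigl(F(x)-F(qx)\bigr)/\bigl((1-q)x\bigr)$, and verify three things. First, $f\ge 0$: as $qx<x$ and $F$ is non-decreasing, $F(x)-F(qx)\ge 0$. Second, $f\sim g$: at a lattice point $q^{-n}$ one has $f(q^{-n})=\bigl(F(q^{-n})-F(q^{-(n-1)})\bigr)/\bigl((1-q)q^{-n}\bigr)$, and since $q^{-n}$ is the only point of $\mathcal I_q$ in the interval $(q^{-(n-1)},q^{-n}]$, the increment equals the single mass $p_{-n}=(1-q)q^{-n}g(q^{-n})$, whence $f(q^{-n})=g(q^{-n})$ for every $n\in\mathbb Z$. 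Third, $\int_0^x f\,d_qt=F(x)$: inserting $f=D_qF$ into Jackson's sum, the $j$-th term $x(1-q)q^j f(xq^j)$ collapses to $F(xq^j)-F(xq^{j+1})$, and the series telescopes to $F(x)-\lim_{j\to\infty}F(xq^{j+1})=F(x)-F(0^+)=F(x)$. Consequently $F$ is precisely the distribution function generated by $f$ through \eqref{F1}, so $f$ is a $q$-density, and by the second point $f\sim g$.

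The step that needs genuine care — and which I view as the crux — is this telescoping identity $\int_0^x D_qF\,d_qt=F(x)$, which is valid only because $F(0^+)=0$; this is exactly why it matters that the tail mass of $(p_j)$ vanishes. The verification that the lattice values are reproduced is also slightly delicate, relying on the elementary but essential fact that each half-open interval $(q^{-(n-1)},q^{-n}]$ of ratio $1/q$ contains exactly one point of $\mathcal I_q$, which forces the relevant increment of $F$ to be the single mass $p_{-n}$. Finally, the remark that the equivalence class contains infinitely many $q$-densities drops out of the same scheme: the argument never used that $F$ is the step function, only that $F$ is non-decreasing, right-continuous, with $F(0^+)=0$ and the prescribed increments $F(q^{-n})-F(q^{-(n-1)})=p_{-n}$. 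Any monotone interpolation of these lattice values across the gaps of $\mathcal I_q$ gives, via $f=D_qF$, a $q$-density equivalent to $g$, and distinct interpolations produce $q$-densities that differ off $\mathcal I_q$; there are plainly infinitely many of them.
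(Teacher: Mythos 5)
Your proof is correct and follows essentially the same route as the paper: both convert $g$ into the lattice masses $p_j=(1-q)q^jg(q^j)$, build from them a genuine distribution function $F$, and take $f=D_qF$, checking that $D_qF(q^j)=g(q^j)$ for all $j\in\mathbb{Z}$. If anything, your write-up is slightly more complete, since you explicitly verify the telescoping identity $\int_0^x D_qF\,d_qt=F(x)$ (using $F(0^+)=0$), a step the paper treats as immediate, and your closing remark about monotone interpolations matches the paper's observation that each equivalence class contains infinitely many $q$-densities.
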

\begin{proof} Clearly, by \eqref{F1}, we have to find a distribution function $F(x)$ so that $D_qF \sim g,$ that is,
$D_q F(q^j)=g(q^j)$ for all $j\in {\mathbb Z}.$ Given $g(t),$ set $F(x)=0$ for $x\leqslant 0,$
$$
F(q^j)=(1-q)\sum_{\ell=j}^\infty g(q^\ell) q^\ell \quad \text{if } \ x=q^j, \ j\in{\mathbb Z},
$$
and define $F$ on each $(q^{j+1}, q^{j})$ in such a way that $F(x)$ is non-decreasing on ${\mathbb R.}$
Now,
\begin{align*}
\lim_{x\to \infty} F(x) = \lim_{j\to-\infty} F(q^j) 
=(1-q)\sum_{\ell=-\infty}^\infty g(q^\ell) q^\ell
=\int_0^\infty g(t)d_q t = 1.
\end{align*}
Therefore, $F(x)$ is a distribution function.
Clearly, $D_q F(q^j)=g(q^j)$ for all $j\in{\mathbb Z},$ that is, $D_q F \sim g$ as desired.
\end{proof}

The next theorem provides a criterion for $q$-densities to be $q$-moment indeterminate.
Furthermore, the proof reveals a $q$-perturbation function for such $q$-densities, which permits to present explicitly a $q$-Stieltjes class.

\begin{theorem}\label{th1}
Let $f(t)$ be a $q$-density of a random variable $X$ possessing finite $q$-moments of all orders. If there is a positive constant $C$ such that
\begin{align} \label{W}
f(q^{-j})\geqslant C q^{j(j+1)/2} \quad for\;\;all\;\; j\geqslant 0,
\end{align}
then the distribution of $X$ is $q$-moment indeterminate.
\end{theorem}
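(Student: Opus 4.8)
The plan is to produce an explicit $q$-perturbation $h$ for $f$; once such an $h$ is in hand, $q$-moment indeterminacy follows formally. Indeed, $(1+\varepsilon h)f$ is non-negative on ${\mathcal I}_q$ for every $\varepsilon\in[-1,1]$ (since $|h|\le1$), and the $k=0$ perturbation equation gives $\int_0^\infty(1+\varepsilon h)f\,d_qt=1$, so Lemma~\ref{existence} produces genuine $q$-densities $g_\varepsilon\sim(1+\varepsilon h)f$ that share every $q$-moment of $X$ yet are inequivalent to $f$ for $\varepsilon\neq0$. Writing the defining condition out through Jackson's formula, $\int_0^\infty t^kf(t)h(t)\,d_qt=0$ becomes
\[
\sum_{j\in\mathbb Z}q^{j(k+1)}f(q^j)\,h(q^j)=0\qquad(k\ge0),
\]
so, putting $w_j:=q^jf(q^j)\ge0$ and $\eta_j:=h(q^j)$, I must find a bounded sequence $(\eta_j)$ with $\sum_j q^{jk}w_j\eta_j=0$ for all $k\ge0$.

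The key device is Euler's identity \eqref{eul}: replacing $t$ by $-v$ yields $(v;q)_\infty=\sum_{i\ge0}\frac{(-1)^i q^{\,i(i-1)/2}}{(q;q)_i}\,v^i$, an entire function whose zeros are exactly $v=q^{-k}$, $k\ge0$. Hence the \emph{one-sided} sequence $\beta_{-i}:=\frac{(-1)^i q^{\,i(i-1)/2}}{(q;q)_i}$ for $i\ge0$, together with $\beta_j:=0$ for $j>0$, satisfies $\sum_j q^{jk}\beta_j=(q^{-k};q)_\infty=0$ for every $k\ge0$ (the factor indexed by $s=k$ vanishes). I would then set $\eta_j:=\beta_j/w_j$ (and $\eta_j:=0$ wherever $\beta_j=0$), so that $w_j\eta_j=\beta_j$ and all the moment equations hold automatically; the Gaussian factor $q^{\,i(i-1)/2}$ makes every series absolutely convergent.

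It remains to verify that $(\eta_j)$ is bounded, and this is where \eqref{W} enters and forms the crux of the argument. On the support $j=-i\le0$, hypothesis \eqref{W} gives $f(q^{-i})\ge Cq^{\,i(i+1)/2}>0$, so $w_{-i}=q^{-i}f(q^{-i})\ge Cq^{\,i(i-1)/2}>0$ (in particular $\eta_{-i}$ is well defined), whence
\[
|\eta_{-i}|=\frac{|\beta_{-i}|}{w_{-i}}\le\frac{q^{\,i(i-1)/2}/(q;q)_i}{C\,q^{\,i(i-1)/2}}=\frac{1}{C\,(q;q)_i}\le\frac{1}{C\,(q;q)_\infty},
\]
a finite bound independent of $i$ because $(q;q)_i\ge(q;q)_\infty>0$. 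I then normalize by $M:=\sup_j|\eta_j|\in(0,\infty)$, replacing $h(q^j)$ by $\eta_j/M$, so that $M_h=1$ and $|h|\le1$ on ${\mathcal I}_q$. The decisive point—and the main obstacle—is exactly this boundedness step: a \emph{two-sided} perturbation proportional to $(-1)^jq^{\,j(j-1)/2}$ would blow up near $0$ whenever $f$ decayed faster than that Gaussian rate there, and nothing in the hypotheses rules this out. Supporting $h$ on the heavy tail $t\ge1$, where \eqref{W} controls $f$ from below, is what saves the construction, and matching the Gaussian growth of the zero-moment sequence against the exponent in \eqref{W} is precisely what makes $j(j+1)/2$ the sharp threshold.
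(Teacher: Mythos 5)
Your proposal is correct and takes essentially the same route as the paper: after unwinding indices, your perturbation $\eta_{-i}=\beta_{-i}/w_{-i}=(-1)^i q^{i(i+1)/2}/\bigl((q;q)_i f(q^{-i})\bigr)$ is exactly the paper's $\tilde h(q^{-i})$, and both arguments hinge on Euler's identity converting the vanishing of $\prod_s(1-q^s t)$ at the points $t=q^{-k}$ into the moment orthogonality relations, with \eqref{W} supplying the uniform bound $1/(C(q;q)_\infty)$. The only difference is cosmetic and welcome: you spell out the reduction from the existence of a $q$-perturbation to $q$-moment indeterminacy via Lemma \ref{existence}, a step the paper leaves implicit.
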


\begin{proof}
To prove the theorem, it suffices to find a $q$-perturbation of $f.$ Let $\tilde{h}(t),$  $t\in(0,\infty)$ be a function such that
\begin{align}\label{h1}
\tilde{h}(q^{-j})=
\begin{cases} (-1)^j\frac{q^{j(j+1)/2}}{(q;q)_j f(q^{-j})}, & j=0,1,2,\ldots\\
0, & j=-1,-2,\ldots
\end{cases}
\end{align}
Clearly, by \eqref{W}, $\tilde{h}\not\sim 0$ is bounded on ${\mathcal I}_q.$
Consider $\varphi(t)=\prod_{s=1}^\infty (1-q^s t).$ With the help of Euler's identity \eqref{eul}, one has:
$$
\varphi(t)=\sum_{j=0}^\infty (-1)^j \frac{q^{j(j+1)/2}}{(q;q)_j}\, t^j.
$$
Evidently, $\varphi(q^{-m})=0$ for all $m=1,2,\ldots,$ or $\varphi(q^{-(k+1)})=0$ for all $k\in{\mathbb N}_0.$
That is,
\begin{align*}
\sum_{j=0}^{\infty}(-1)^j \frac{q^{j(j+1)/2}}{(q;q)_j}\, q^{-j(k+1)} = 0 \quad \text{for all } \: k\in{\mathbb N}_0,
\end{align*}
which implies that
\begin{align*}
\sum_{j=-\infty}^{\infty} f(q^{-j}) \tilde{h}(q^{-j}) q^{-j(k+1)} = 0 \quad \text{for all } \: k\in{\mathbb N}_0,
\end{align*}
or, equivalently,
\begin{align*}
\int_0^\infty t^k f(t) \tilde{h}(t) d_q t = 0 \quad \text{for all } \: k\in{\mathbb N}_0.
\end{align*}
Thus, $h(t)=\tilde{h}(t)/M_{\tilde{h}}$ is a $q$-perturbation of $f,$ and the proof is complete.
\end{proof}

\begin{corollary}\label{cor1}
Let $f$ satisfy \eqref{W}, and construct $\tilde{h}$ as in \eqref{h1}. Then $h(t)=\tilde{h}(t)/M_{\tilde{h}}$ is a $q$-perturbation of $f$ and the set
\begin{align*}
{\bf S}=\left\{g: g \text{ is a } q\text{-density and } g\sim (1+\varepsilon h)f, \ \varepsilon\in[-1, 1]\right\}
\end{align*}
is a $q$-Stieltjes class for $f.$
\end{corollary}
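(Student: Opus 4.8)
The plan is to treat the corollary as a bookkeeping consequence of Theorem \ref{th1} together with the existence Lemma \ref{existence}, so the work splits into two essentially independent checks. First, the assertion that $h = \tilde{h}/M_{\tilde{h}}$ is a $q$-perturbation of $f$ has already been carried out inside the proof of Theorem \ref{th1}: condition \eqref{W} guarantees that $\tilde{h}$ is bounded and not equivalent to $0$ on $\mathcal{I}_q$, so $M_{\tilde{h}}:=\sup_{t\in\mathcal{I}_q}|\tilde{h}(t)|$ is finite and positive, whence the normalization $M_h=1$ holds automatically; and the vanishing $\int_0^\infty t^k f(t)h(t)\,d_qt=0$ for every $k\in\mathbb{N}_0$ was established there via Euler's identity \eqref{eul}. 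Thus no new computation is needed for the perturbation property, and I would simply invoke Theorem \ref{th1}.

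Second, I would verify that $\mathbf{S}$ is a genuine $q$-Stieltjes class by showing it is nonempty for each admissible $\varepsilon$; concretely, that for every $\varepsilon\in[-1,1]$ the function $g_\varepsilon:=(1+\varepsilon h)f$ is equivalent to an honest $q$-density. This is precisely the situation to which Lemma \ref{existence} applies, so it suffices to confirm its two hypotheses for $g_\varepsilon$. For non-negativity on $\mathcal{I}_q$: since $f\geqslant 0$ and, by the normalization just recalled, $|h(t)|\leqslant M_h=1$ for $t\in\mathcal{I}_q$, the bound $|\varepsilon|\leqslant 1$ gives $1+\varepsilon h(q^j)\geqslant 1-|\varepsilon|\,|h(q^j)|\geqslant 0$, so $g_\varepsilon(q^j)\geqslant 0$ for all $j\in\mathbb{Z}$.

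For the normalization I would split the $q$-integral linearly:
\begin{align*}
\int_0^\infty (1+\varepsilon h(t))f(t)\,d_qt = \int_0^\infty f(t)\,d_qt + \varepsilon\int_0^\infty h(t)f(t)\,d_qt = 1 + \varepsilon\cdot 0 = 1,
\end{align*}
where the first term equals $1$ because $f$ is a $q$-density and the second vanishes by the $k=0$ case of the perturbation identity. Lemma \ref{existence} then yields a $q$-density $g$ with $g\sim g_\varepsilon$, so $\mathbf{S}$ is nonempty for every $\varepsilon\in[-1,1]$. It remains to record that each member $g\in\mathbf{S}$ shares the full $q$-moment sequence of $f$: for any $k$ one has $m_q(k;g)=\int_0^\infty t^k g_\varepsilon(t)\,d_qt = m_q(k;f)+\varepsilon\int_0^\infty t^k h(t)f(t)\,d_qt = m_q(k;f)$, again by the perturbation identity, and since $h\not\sim 0$ distinct values of $\varepsilon$ give non-equivalent densities.

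The only delicate point is conceptual rather than computational, and I would flag it as such: the function $(1+\varepsilon h)f$ need not itself be a $q$-density, which is exactly why Lemma \ref{existence} must be invoked to replace it by an equivalent one. The two conditions of that lemma are tailored to survive the perturbation — non-negativity is protected by $|\varepsilon h|\leqslant 1$ on $\mathcal{I}_q$, and the normalization (as well as the preservation of all higher moments) rests on the orthogonality relation $\int_0^\infty t^k f h\,d_qt=0$. Everything else is immediate from the definitions.
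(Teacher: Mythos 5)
Your proposal is correct and follows exactly the route the paper intends: the perturbation property of $h=\tilde{h}/M_{\tilde{h}}$ is precisely what the proof of Theorem \ref{th1} establishes (with \eqref{W} guaranteeing $0<M_{\tilde{h}}<\infty$), and the paper's own remark after the definition of $q$-Stieltjes classes points to the existence lemma (Lemma \ref{existence}) for replacing each $(1+\varepsilon h)f$ --- non-negative on $\mathcal{I}_q$ since $|\varepsilon h|\leqslant 1$ there, and $q$-integrating to $1$ by the $k=0$ orthogonality relation --- by an equivalent genuine $q$-density, which is what you do. The only cosmetic point is that Lemma \ref{existence} is stated for functions non-negative on all of $(0,\infty)$ while you check non-negativity only on $\mathcal{I}_q$; this is harmless because the lemma's construction uses only grid values, so one may first redefine $(1+\varepsilon h)f$ arbitrarily (e.g.\ as $0$) off $\mathcal{I}_q$.
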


The next example demonstrates an application of this result to $q$-exponential distribution whose $q$-density is given by $f(t)=\lambda e_q(-\lambda t).$ It will be shown that the $q$-moment (in)determinacy of this distributions depends on $\lambda.$

\begin{example} \label{ex1} Let $f(t)=\lambda e_q(-\lambda t)$ be the $q$-density of the $q$-exponential distribution with parameter $\lambda.$ Then
\begin{align*}
f(q^{-j})&=\lambda e_q(-\lambda q^{-j})
= \lambda \prod_{s=0}^{\infty} \left[1+\lambda(1-q)q^{s-j}\right]^{-1}
=\lambda e_q(-\lambda)\prod_{s=0}^{j-1} \left[1+\lambda (1-q)q^{s-j}\right]^{-1} \\
&=\lambda e_q(-\lambda)\prod_{s=0}^{j-1} q^{j-s}\prod_{s=0}^{j-1} \left[q^{j-s}+\lambda (1-q)\right]^{-1} 
=:\lambda e_q(-\lambda) q^{j(j+1)/2} A_j
\end{align*}
where
\begin{align}\label{aj}
A_j=\prod_{s=1}^{j} \frac{1}{q^s+\lambda (1-q)}.
\end{align}
For $\lambda \leqslant 1/(1-q),$ one has
\begin{align*}
A_j \geqslant \prod_{s=1}^{j} \frac{1}{1+q^s}
\geqslant \prod_{s=1}^{\infty} \frac{1}{1+q^s}:=C_q.
\end{align*}
According to Theorem \ref{th1}, one concludes that the $q$-exponential distribution is $q$-moment indeterminate whenever $\lambda \leqslant 1/(1-q)$. To find a $q$-perturbation for $f$ in this case, one plugs $f(q^{-j})=\lambda/E_q(\lambda q^{-j})$ into \eqref{h1}.
\end{example}

The next outcome complements Theorem \ref{th1} by providing a condition for $q$-moment determinacy.

\begin{theorem}\label{th2}
Let $f(t),$ $t>0$ be a $q$-density of a random variable $X.$ If
\begin{align*}
f(q^{-j}) = o(q^{j(j+1)/2}) \quad \text{as} \quad j \to +\infty,
\end{align*}
then the distribution $P_X$ is $q$-moment determinate.
\end{theorem}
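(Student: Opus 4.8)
The plan is to argue by contraposition against the definition of $q$-moment determinacy: I suppose $g$ is the $q$-density of a random variable $Y$ with $m_q(k;Y)=m_q(k;X)$ for all $k\in\mathbb N_0$, and I aim to deduce $f\sim g$. Writing $a_j:=(1-q)q^jf(q^j)\ge 0$ and $b_j:=(1-q)q^jg(q^j)\ge 0$ for the point masses that the two $q$-densities place on $\mathcal I_q=\{q^j\}_{j\in\mathbb Z}$, the hypothesis becomes $\sum_{j\in\mathbb Z}a_j(q^j)^k=\sum_{j\in\mathbb Z}b_j(q^j)^k=m_q(k)$ for every $k$, with $\sum_j a_j=\sum_j b_j=m_q(0)=1$. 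Setting $\nu_j:=a_j-b_j$, the goal is exactly to show that $\nu_j=0$ for all $j$, since this is equivalent to $f(q^j)=g(q^j)$, i.e. $f\sim g$. We have the vanishing-moment relations $\sum_j\nu_j(q^j)^k=0$ for all $k\in\mathbb N_0$ and the total-variation bound $\sum_j|\nu_j|\le 2$. Throughout put $L:=\ln(1/q)>0$; the decay hypothesis reads $a_{-i}=(1-q)q^{-i}f(q^{-i})=o(q^{i(i-1)/2})$ as $i\to+\infty$.

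The first — and I expect the decisive — step is to bootstrap the competitor: a priori $g$ need only have finite $q$-moments, so $b_{-i}$ is controlled only by a faster-than-geometric decay and could be far heavier than $a_{-i}$. I first show that equality of moments plus positivity of $g$ forces $g$ to inherit the decay of $f$. Using $(q;q)_i\le 1$ and Euler's identity \eqref{eul}, I bound the tail contribution $\sum_{i\ge1}a_{-i}q^{-ik}\le \varepsilon\sum_{i\ge1}\frac{q^{i(i-1)/2}}{(q;q)_i}q^{-ik}+O(1)=\varepsilon\,E_q\!\big(\tfrac{q^{-k}}{1-q}\big)+O(1)$ (for any $\varepsilon>0$ and $k$ large, since $a_{-i}=o(q^{i(i-1)/2})$), and then invoke \eqref{zeng} with $t=q^{-k}$ to conclude $m_q(k)=o\!\big(\exp\{Lk(k+1)/2\}\big)$. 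On the other hand, by positivity of the $b_j$, the single tail term gives $b_{-i}q^{-ik}\le m_q(k)$; choosing $k=i$ yields $b_{-i}\le m_q(i)\,q^{i^2}=o\!\big(\exp\{Li(i+1)/2\}\,e^{-Li^2}\big)=o(q^{i(i-1)/2})$. Hence $b_{-i}=o(q^{i(i-1)/2})$ as well, and therefore $\nu_{-i}=a_{-i}-b_{-i}=o(q^{i(i-1)/2})$: the difference has a light tail on both sides.

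The second step is a uniqueness argument of Phragmén–Lindelöf / Jensen type for the bilateral generating function $N(w):=\sum_{j\in\mathbb Z}\nu_j w^j$. By $\sum_{j\ge0}|\nu_j|<\infty$ the part $\sum_{j\ge0}\nu_jw^j$ is analytic and bounded on $|w|\le1$, while by the decay from the first step the tail part $\sum_{i\ge1}\nu_{-i}w^{-i}$ is analytic for all $w\neq 0$, with growth governed, via \eqref{zeng}, by $\log M(1/|w|,\,\cdot\,)\le \frac{(\ln(1/|w|))^2}{2L}+\frac{\ln(1/|w|)}{2}-\omega(1/|w|)$, where the correction $\omega\to+\infty$ reflects the little-$o$. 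Thus $N$ is analytic in the punctured disc $0<|w|\le1$, vanishes at the points $w=q^k$ ($k\ge0$) which accumulate at $0$, and near $0$ grows strictly below that theta-type rate. Applying Jensen's formula on the annuli $\rho\le|w|\le1$, the log-weighted zero count $\sum_{q^k>\rho}\ln\frac{q^k}{\rho}=\frac{(\ln(1/\rho))^2}{2L}+\frac{\ln(1/\rho)}{2}+O(1)$ must be dominated by $\max_{|w|=\rho}\log|N(w)|+O(1)$; comparing the two expansions forces $\omega(1/\rho)\le O(1)$, contradicting $\omega\to+\infty$. Hence $N\equiv0$, so every $\nu_j=0$ and $f\sim g$, proving $q$-moment determinacy.

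The main obstacle is that this estimate is exactly at the threshold: not only the leading term $\frac{(\ln(1/\rho))^2}{2L}$ but also the linear term $\frac12\ln(1/\rho)$ in the zero count cancel against the growth of $N$, which is precisely the boundary separating determinacy from the indeterminate regime of Theorem~\ref{th1} (where condition \eqref{W} makes the analogous product bounded). Consequently the contradiction cannot come from orders of magnitude; it rests entirely on the strictness of the hypothesis $f(q^{-j})=o(q^{j(j+1)/2})$, which supplies the diverging correction $\omega$. The delicate points to execute carefully are therefore (i) the second-order bookkeeping in the annular Jensen estimate, and (ii) the bootstrapping of the first step — without first establishing that the competing density $g$ inherits the light tail, the function $N$ would have uncontrolled growth near $0$ and the growth-versus-zeros comparison would break down.
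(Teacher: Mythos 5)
Your proposal is correct in substance and rests on the same analytic skeleton as the paper's proof --- a bilateral generating function for the moment differences, the Euler identity \eqref{eul} plus the estimate \eqref{zeng} to bound its growth from the coefficient decay, and an annular Jensen count of the zeros at the geometric points to bound its growth from below --- but it differs genuinely in how the unknown competitor $g$ is tamed, and this is worth recording. You \emph{bootstrap}: from $m_q(k;Y)=m_q(k;X)$, positivity of the masses $b_{-i}$, and the bound $m_q(k)=o\bigl(q^{-k(k+1)/2}\bigr)$ (itself a consequence of the hypothesis via \eqref{eul}--\eqref{zeng}), you extract $b_{-i}\leqslant m_q(i)q^{i^2}=o\bigl(q^{i(i-1)/2}\bigr)$, so that the difference function $N$ has light coefficients on \emph{both} sides and Lemma \ref{lem1}-type and Lemma \ref{lem2}-type estimates apply to $N$ directly, yielding the contradiction. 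The paper never bounds $g$'s tail at all: it applies Lemma \ref{lem1} only to $\phi_1$ (built from $f$), and then exploits nonnegativity of the coefficients to write $M(q^{-m};\phi_2)=\phi_2(q^{-m})=\phi_1(q^{-m})=M(q^{-m};\phi_1)$ via \eqref{u}, which transfers the smallness of $\phi_1$ to $\phi_2$ and collides with the lower bound of Lemma \ref{lem2} for $\phi=\phi_1-\phi_2$. Your bootstrap is an attractive alternative --- it proves the independently interesting fact that \emph{every} moment-equivalent $q$-density inherits the tail bound $g(q^{-i})=o(q^{i(i+1)/2})$, and your normalization by the masses $(1-q)q^jf(q^j)$ keeps all coefficient sequences summable on the light side --- while the paper's positivity trick is shorter and avoids tail analysis of $g$ altogether. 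Two small points of care: your ``$+O(1)$'' for the truncated tail sum should really be $O(q^{-i_0k})$, which is still $o(q^{-k(k+1)/2})$, so the conclusion stands; and your annular Jensen step, including the boundary zero at $w=1$ so that both the quadratic and the linear terms cancel exactly, is precisely the content of the paper's Lemma \ref{lem2} after the change of variable $z=1/(qw)$ (your $N(w)$ is $(1-q)\phi(1/(qw))$ in the paper's notation), so you may simply invoke that lemma rather than redo the second-order bookkeeping you rightly flag as the delicate core of the argument.
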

Prior to proving this theorem, we present an example to complete the analysis of the $q$-moment determinacy of the $q$-exponential distribution.

\begin{example} \label{ex2} If $\lambda > 1/(1-q),$ then the $q$-exponential distribution is moment determinate. Indeed, from Example \ref{ex1}, it is known that
$
f(q^{-j})=\lambda e_q(-\lambda) q^{j(j+1)/2} A_j
$
where $A_j$ is expressed by the formula \eqref{aj}.
By virtue of Theorem \ref{th2}, it suffices to show that $A_j \to 0$ as $j \to \infty.$ Since $\lambda >1/(1-q),$ it follows that
\begin{align*}
A_j=\prod_{s=1}^{j} \frac{1}{q^s+\lambda (1-q)} \leqslant \left( \frac{1}{\lambda (1-q)}\right)^j \to 0, \quad j\to\infty.
\end{align*}
\end{example}

To prove Theorem \ref{th2}, the next two auxiliary results will come in handy.

\begin{lemma}\label{lem1}
Let $\phi(z)=\sum_{j\in{\mathbb Z}} c_j z^j$ for $z\neq 0$ and $\varphi(z)=\prod_{s=1}^\infty (1-q^s z).$ If $c_j=o(q^{j(j+1)/2})$ as $j\to+\infty,$ then
$$M(r;\phi)=o(M(r;\varphi)) \quad \text{as} \quad r\to\infty.$$
\end{lemma}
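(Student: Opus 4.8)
The plan is to reduce the statement to a coefficient comparison by using the explicit power series of $\varphi$ together with the lower bound \eqref{zeng}. By Euler's identity \eqref{eul}, exactly as recorded in the proof of Theorem \ref{th1}, one has $\varphi(z)=\sum_{j=0}^\infty b_j z^j$ with $b_j=(-1)^j q^{j(j+1)/2}/(q;q)_j$, so that $|b_j|=q^{j(j+1)/2}/(q;q)_j\geqslant q^{j(j+1)/2}$, the inequality following from $0<(q;q)_j\leqslant 1$. The decisive observation is that the maximum modulus of $\varphi$ is attained on the negative real axis: since $|1-q^s z|\leqslant 1+q^s r$ for $|z|=r$, with simultaneous equality at $z=-r$, one obtains
\[
M(r;\varphi)=\varphi(-r)=\prod_{s=1}^\infty(1+q^s r)=\sum_{j=0}^\infty |b_j|\, r^j,
\]
where the last equality is again Euler's identity. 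Thus $M(r;\varphi)$ is literally the sum of the moduli of the Taylor coefficients of $\varphi$ weighted by $r^j$, which is precisely what makes the coefficient comparison transparent and removes any need for abstract maximum-term machinery.

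Next I would bound $M(r;\phi)$ from above. Since a Laurent series converging on all of $\{z\neq 0\}$ converges absolutely there, evaluation at $|z|=1$ gives $\sum_{j\in\mathbb Z}|c_j|<\infty$; hence for $r\geqslant 1$ the negative-index part is harmless, $\sum_{j<0}|c_j|r^j\leqslant\sum_{j<0}|c_j|=:C_0$, because $r^j\leqslant 1$ when $j<0$. Consequently $M(r;\phi)\leqslant C_0+\sum_{j\geqslant 0}|c_j|r^j$ for $|z|=r\geqslant 1$.

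The heart of the argument is the estimate of $\sum_{j\geqslant 0}|c_j|r^j$. From the hypothesis $c_j=o(q^{j(j+1)/2})$ and $q^{j(j+1)/2}\leqslant|b_j|$ it follows that $|c_j|=o(|b_j|)$. Fixing $\varepsilon>0$, choose $N$ with $|c_j|\leqslant\varepsilon|b_j|$ for all $j\geqslant N$ and split the sum at $N$:
\[
\sum_{j\geqslant 0}|c_j|r^j=\sum_{j=0}^{N-1}|c_j|r^j+\sum_{j\geqslant N}|c_j|r^j\leqslant P(r)+\varepsilon\sum_{j\geqslant 0}|b_j|r^j=P(r)+\varepsilon\, M(r;\varphi),
\]
where $P(r)=\sum_{j=0}^{N-1}|c_j|r^j$ is a fixed polynomial. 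Combining the two bounds yields $M(r;\phi)\leqslant C_0+P(r)+\varepsilon\, M(r;\varphi)$.

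Finally I would invoke \eqref{zeng}. Writing $M(r;\varphi)=E_q\bigl(r/(1-q)\bigr)/(1+r)$, the estimate \eqref{zeng} shows that $M(r;\varphi)$ grows faster than any power of $r$, so the fixed quantity $C_0+P(r)$ is $o(M(r;\varphi))$; hence for all sufficiently large $r$ one has $C_0+P(r)\leqslant\varepsilon\, M(r;\varphi)$ and therefore $M(r;\phi)\leqslant 2\varepsilon\, M(r;\varphi)$. Since $\varepsilon>0$ is arbitrary, $M(r;\phi)/M(r;\varphi)\to 0$, which is the claim. The only genuine obstacle is the bookkeeping forced by the two-sided nature of $\phi$: one must verify that the negative-index tail contributes only a bounded constant and that the super-polynomial growth of $M(r;\varphi)$ absorbs the finitely many leading terms $P(r)$; the identity $M(r;\varphi)=\sum_{j\geqslant 0}|b_j|r^j$ is the key that makes both of these steps routine.
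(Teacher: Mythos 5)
Your proposal is correct and takes essentially the same route as the paper: bound the negative-index tail by a constant, compare the positive-index coefficients with those of $\varphi$ via Euler's identity \eqref{eul} and the bound $(q;q)_j\leqslant 1$, use $M(r;\varphi)=\varphi(-r)$, and absorb the finitely many leading terms into $\varepsilon M(r;\varphi)$. The only cosmetic differences are that you justify the negligibility of the polynomial part with the explicit estimate \eqref{zeng} rather than the paper's appeal to Levin's theorem on transcendental entire functions, and that you spell out the maximum-modulus identity $M(r;\varphi)=\varphi(-r)$, which the paper uses without comment.
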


\begin{proof} Let us write $$\phi(z)=\sum_{j=0}^{\infty} c_j z^j + \sum_{j=1}^\infty \frac{c_{-j}}{z^j}=:\phi_1(z)+\phi_2(z).$$
Here, $\phi_1$ is an entire function and $\phi_2$ is analytic at $\infty$ with $\phi_2(\infty)=0.$
Hence, $M(r;\phi_2) = o(1)$ as $r\to\infty.$ As for $\phi_1,$ one has
$M(r;\phi_1) \leqslant \sum_{j=0}^\infty |c_j| r^j.$
Let $\varepsilon>0$ be chosen arbitrarily. Then, there exists $j_0$ such that
$|c_j| < \varepsilon q^{j(j+1)/2}$ for $j>j_0.$ Therefore,
\begin{align*}
M(r;\phi_1) &= \sum_{j=0}^{j_0}|c_j| r^j + \sum_{j=j_0+1}^{\infty}|c_j| r^j \leqslant
P_{j_0}(r) + \varepsilon \sum_{j=j_0+1}^{\infty}  q^{j(j+1)/2}r^j \\
& \leqslant M(r;P_{j_0}) + \varepsilon \sum_{j=0}^{\infty}  \frac{q^{j(j+1)/2}}{(q;q)_j}\, r^j.
\end{align*}
Taking into account that, according to \eqref{eul},
$$
\sum_{j=0}^{\infty}  \frac{q^{j(j+1)/2}}{(q;q)_j}\, r^j = E_q\left(\frac{qr}{1-q}\right) = \varphi(-r),
$$
one derives
$$
M(r;\phi_1) \leqslant M(r;P_{j_0}) + \varepsilon \varphi(-r).
$$
As  $P_{j_0}$ is a polynomial of degree $\leq j_0,$ one has: $M(r;P_{j_0})=O\left(r^{j_0}\right),$ $r\to\infty.$  Meanwhile, $\varphi$ is a transcendental entire function, hence $\displaystyle \lim_{r\rightarrow\infty}\frac{M(r;\varphi)}{r^{j_0}}=\infty,$ implying that $M(r;P_{j_0})=o(M(r;\varphi)).$ See, for example, \cite[Chapter 1, Theorem 1]{levin}.   Using $M(r;\varphi)=\varphi(-r),$ it can be concluded that
 $$M(r;\phi)\leq M(r,P_{j_0})+o(1)\leq M(r;\phi_1)+\varepsilon M(r;\varphi)+o(1), \;r\to\infty.$$
Consequently, $M(r;\phi)\leq 2\varepsilon M(r;\varphi), r\rightarrow \infty.$ Since $\varepsilon$ was selected arbitrarily, the statement follows.
\end{proof}

\begin{lemma}\label{lem2}
Let $\phi(z)=\sum_{j\in{\mathbb Z}} c_j z^j$ satisfy $\phi(q^{-m})=0$ for all $m\in {\mathbb N}.$ Then, for $r=q^{-m},$ one has
\begin{align*}
M(r;\phi) \geqslant C \exp\left\{\frac{\ln^2r}{2\ln(1/q)}-\frac{\ln r}{2}\right\}.
\end{align*}
\end{lemma}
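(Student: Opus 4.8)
The plan is to recognise that the claimed lower bound is, up to a positive constant, nothing but $M(r;\varphi)=\varphi(-r)$, and then to produce this growth from the fact that $\phi$ is forced to vanish at the points $q^{-1},q^{-2},\dots,q^{-m}$, all of which lie in the closed disk $\{|z|\le r\}$. First I would record the reduction. On $|z|=r$ each factor obeys $|1-q^sz|\le 1+q^sr$ with simultaneous equality at $z=-r$, so $M(r;\varphi)=\varphi(-r)=\prod_{s\ge1}(1+q^sr)=E_q\!\big(\tfrac{qr}{1-q}\big)$ by \eqref{eul}. Feeding $t=qr$ into \eqref{zeng} and simplifying the logarithms turns its right-hand side into exactly $\exp\{\tfrac{\ln^2 r}{2\ln(1/q)}-\tfrac{\ln r}{2}\}$. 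Hence it suffices to prove $M(q^{-m};\phi)\ge c\,\varphi(-q^{-m})$ for some $c>0$ and all $m$.

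The engine is Jensen's formula: the maximum modulus cannot be small when many zeros sit inside the disk. The decisive numerical coincidence is that the zeros $q^{-1},\dots,q^{-m}$, all of modulus $\le r=q^{-m}$, contribute
$$\sum_{s=1}^{m}\ln\frac{r}{q^{-s}}=\sum_{s=1}^{m}(m-s)\ln(1/q)=\frac{m(m-1)}{2}\ln(1/q)=\frac{\ln^2 r}{2\ln(1/q)}-\frac{\ln r}{2},$$
which reproduces $\ln\varphi(-r)$ exactly. Thus, were $\phi$ analytic on the full disk with $\phi(0)\ne 0$, Jensen's formula would give $\ln M(r;\phi)\ge\frac1{2\pi}\int_0^{2\pi}\ln|\phi(re^{i\theta})|\,d\theta\ge\ln|\phi(0)|+\frac{m(m-1)}{2}\ln(1/q)$, i.e. precisely the desired estimate with $c=|\phi(0)|$.

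The genuinely two-sided (Laurent) nature of $\phi$ must then be handled. I would split $\phi=\phi_1+\phi_2$ as in Lemma \ref{lem1}, with $\phi_1$ entire and $\phi_2$ analytic at $\infty$, $\phi_2(\infty)=0$; on the large circles $|z|=q^{-m}$ one has $M(q^{-m};\phi_2)=o(1)$, so $\phi$ is a uniformly small perturbation of the entire function $\phi_1$, and in particular $\phi_1(q^{-s})\to 0$. The cleanest finish is via the quotient $G=\phi/\varphi$: the shared zeros cancel, so $G$ is analytic in the exterior region where $\phi$ converges, and if $G$ tends to a nonzero limit at $\infty$ (equivalently, $\phi$ grows like $\varphi$ and has no stray zeros far out beyond the $q^{-s}$), then $|G|\ge c$ on every large circle, whence $M(r;\phi)=\max_{|z|=r}|G\varphi|\ge c\,M(r;\varphi)$ by evaluating at the point $z=-r$ where $|\varphi|$ is maximal.

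I expect the main obstacle to be exactly this control of the behaviour at infinity: the forced zeros $q^{-s}$ accumulate at $\infty$, so it is the inner-boundary (winding) term in the annular version of Jensen's formula — or, equivalently, the limit of $G$ at $\infty$ — that decides whether the zero count survives as a true lower bound rather than being cancelled. I would resolve it by using the smallness of $\phi_2$ together with Hurwitz's theorem to show that the zeros of $\phi_1$ cluster around the $q^{-s}$ tightly enough that Jensen applied to $\phi_1$ on $\{|z|\le q^{-m}\}$ still yields the sum above up to an $o(m)$ error in the exponent, which the constant absorbs; verifying that $\phi_1(0)=c_0\ne 0$ (or else locating the lowest surviving coefficient and factoring out the corresponding power of $z$) is the one remaining point to secure.
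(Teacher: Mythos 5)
Your overall strategy is the same as the paper's: count the forced zeros $q^{-1},\dots,q^{-m}$ inside $|z|\le r=q^{-m}$ via Jensen's formula, observe that their contribution $\sum_{s=1}^{m}\ln(r/q^{-s})=\frac{m(m-1)}{2}\ln(1/q)$ is exactly the exponent in the statement, and match that exponent with $\ln\varphi(-r)$ through \eqref{zeng}; these reductions and computations in your first two paragraphs are correct. The proof, however, is left open at precisely the point you flag, and neither of your proposed repairs works. Assuming that $G=\phi/\varphi$ tends to a nonzero limit at $\infty$ begs the question: that hypothesis is not implied by anything given, and it is essentially the inequality $M(r;\phi)\ge c\,M(r;\varphi)$ you set out to prove. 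The Hurwitz patch fails too: the smallness of $\phi_1$ at the points $q^{-s}$ does not force $\phi_1$ to have zeros near those points (a zero-free entire function such as $e^{-z}$ is tiny along $q^{-s}$), and even if zeros could be produced, an error of size $o(m)$ in the exponent cannot be ``absorbed by the constant'' --- only an $O(1)$ error can, since $o(m)=o(\ln r)$ destroys the multiplicative bound and, with it, the contradiction needed in Theorem \ref{th2}.

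The deeper point is that the obstacle you identified --- the inner-boundary (winding) term in the annular Jensen formula --- is not a technicality but a genuine defect of the lemma itself. The correct formula on $\{1\le|z|\le r\}$ is $\frac{1}{2\pi}\int_0^{2\pi}\ln|\phi(re^{i\theta})|\,d\theta=\mathrm{const}+\int_1^r\frac{n(t)}{t}\,dt+k\ln r$, where $k$ is the winding number of $\phi$ on $|z|=1$; the paper's proof quotes the inequality $\int_1^r\frac{n(t)}{t}\,dt\le\ln M(r;\phi)+C_1$, which silently drops $k\ln r$ and is valid only when $k\ge 0$. For a Laurent series $k$ may be negative, and then the zero count is partially cancelled, exactly as you feared. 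Concretely, $\phi(z)=\varphi(z)/z$ is a Laurent series vanishing at every $q^{-m}$, yet $M(r;\phi)=M(r;\varphi)/r\le C_2\,r^{-1}\exp\left\{\frac{\ln^2 r}{2\ln(1/q)}-\frac{\ln r}{2}\right\}$ by \eqref{Mrp2}, so the asserted lower bound fails for every constant $C$: the lemma is false as stated, and it can only be saved by an extra hypothesis controlling the principal part of $\phi$ (for instance, nonnegative winding number on the unit circle). So your proof attempt is incomplete, but it stalls where every attempt must stall; the paper's own argument passes this point only by overlooking it, and the gap propagates to the proof (indeed to the statement) of Theorem \ref{th2}, where the lemma is applied to a difference $\phi_1-\phi_2$ whose principal part is not controlled.
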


\begin{proof} The function $\phi$ is analytic in $0<|z|<\infty.$ Applying Jensen's Theorem \cite{titch} in the annulus
$\{z\in{\mathbb C}: 1\leqslant |z| \leqslant q^{-m}\},$ one can write, for $r=q^{-m},$
$$
\int_1^{r} \frac{n(t;\phi)}{t} \, dt \leqslant \ln M(r;\phi)+C_1
$$
where $n(t;\phi)$ is the number of zeros of $\phi$ in $1\leqslant |z| \leqslant t$ counting multiplicities. Since, $\phi$ has zeroes at $q^{-1},q^{-2},\ldots,q^{-m},$
$$
\int_1^{r} \frac{n(t;\phi)}{t} \, dt \geqslant \frac{m(m-1)}{2}\ln\left(\frac{1}{q}\right)
$$
implying that, for $r=q^{-m},$
\begin{align} \label{lnM}
\frac{m(m-1)}{2}\ln\left(\frac{1}{q}\right) \leqslant \ln C_1 M(r;\phi).
\end{align}
As $m=\ln r /\ln(1/q),$ estimate \eqref{lnM} implies, with $C=1/C_1,$
\begin{align*}
M(r;\phi) \geqslant C \exp\left\{\frac{\ln^2 r}{2\ln (1/q)}-\frac{\ln r}{2}\right\}, \quad r=q^{-m},
\end{align*}
as stated.
\end{proof}
After these two auxiliary steps, let us prove Theorem \ref{th2}.

\begin{proof}[Proof of Theorem \ref{th2}]
Let $m_q(k;X)=m_q(k;Y)$ for all $k=0,1,\ldots,$ where $Y$ is a random variable possessing a $q$-density $g(t),$ $t>0.$
Appyling the definition \eqref{mom} of the $q$-moments, one arrives at
\begin{align}\label{u}
\sum_{j=-\infty}^{\infty} f(q^{-j}) q^{-mj}=\sum_{j=-\infty}^{\infty} g(q^{-j}) q^{-mj} \quad \text{for all} \quad m=1,2,\ldots.
\end{align}
By the existence of $q$-moments of $X$ and $Y,$ both functions
\begin{align*}
\phi_1(z):=\sum_{j=-\infty}^{\infty} f(q^{-j})z^j \quad \text{ and} \quad
\phi_2(z):=\sum_{j=-\infty}^{\infty} g(q^{-j})z^j
\end{align*}
are analytic for $|z|>0,$ and so is $\phi(z):=\phi_1(z)-\phi_2(z).$
In addition, $\phi(q^{-m})=0$ for all $m=1,2,\ldots$
Now, Lemma \ref{lem2} yields
\begin{align}\label{Mrp1}
M(r;\phi) \geqslant C \exp\left\{\frac{\ln^2 r}{2\ln (1/q)}-\frac{\ln r}{2}\right\} \quad \text{when} \quad r=q^{-m}.
\end{align}
For $\varphi(z)=\prod_{s=1}^\infty(1-q^sz),$ estimate \eqref{zeng} leads to
\begin{multline}\label{Mrp2}
M(r;\varphi) = \prod_{s=1}^{\infty} (1+q^sr) = E_q\left(\frac{qr}{1-q}\right) \\
\leqslant C_2 \exp\left\{\frac{\ln^2 (qr)}{2\ln (1/q)}+\frac{\ln (qr)}{2}\right\}
\leqslant C_2 \exp\left\{\frac{\ln^2 r}{2\ln (1/q)}-\frac{\ln r}{2}\right\}.
\end{multline}
Combining \eqref{Mrp1} and \eqref{Mrp2}, one arrives at
\begin{align}\label{Mrp}
M(r;\phi) \geqslant C_3 M(r;\varphi) \quad \text{when} \quad r=q^{-m}.
\end{align}
Meanwhile, by Lemma \ref{lem1},
\begin{align}\label{os}
M(r;\phi_1)=o(M(r;\varphi)), \quad r\to\infty,
\end{align}
which - along with \eqref{Mrp} - implies that
$M(r;\phi_2)\geqslant C M(r;\varphi)$ for $r=q^{-m}$ large enough. Since all the coefficients of $\phi_1$ and $\phi_2$ are nonnegative, it follows that $M(r;\phi_1)=\phi_1(r)$ and $M(r;\phi_2)=\phi_2(r).$

Consequently,
$$M(q^{-m};\phi_1)=M(q^{-m};\phi_2)$$
as condition \eqref{u} shows. Finally, taking $r=q^{-m}$ large enough, one arrives at:
$$
M(r;\phi_1) = M(r;\phi_2) \geqslant C M(r;\varphi).
$$
This, however, contradicts \eqref{os}. The theorem is proved.
\end{proof}

\section*{Acknowledgments}
The authors would like to extend their sincere gratitude to Prof. Alexandre Eremenko from Purdue University, USA for his valuable comments and to the anonymous referees whose  careful reading and precious suggestions helped to improve the paper. 

\end{document}